\theoremstyle{plain}
\newtheorem{thm}{Theorem}[section]
\newtheorem{lem}[thm]{Lemma} 
\newtheorem{prop}[thm]{Proposition}
\theoremstyle{definition}
\theoremstyle{remark}
\numberwithin{equation}{section}
\newcommand{\R}{\mathbb{R}}
\newcommand{\Cn}{\mathscr {C}}
\newcommand{\AL}{\mathscr {S}}
\DeclareMathOperator{\grad}{\rm {grad }}
\DeclareMathOperator{\dist}{\rm {dist }}
\title[Regular covers for open relatively compact subanalytic sets]{Regular covers of open relatively compact subanalytic sets}
\author[A. Parusi\' nski]{Adam Parusi\'nski }
\address {Laboratoire J.A. Dieudonn\'e UMR CNRS 7351, 
Universit\'e de Nice - Sophia Antipolis,
Parc Valrose,
06108 Nice Cedex 02,
France}
\email{adam.parusinski@unice.fr}
\begin{document}

%\subjclass[2000]{Primary: 14P25. Secondary: 14P10}

%\thanks{Thanks.}

\maketitle 
Let $M$ be a real analytic manifold of dimension $n$.  In this paper we study the algebra $\AL (M)$ of  relatively compact  open subanalytic subsets of $M$.  As we show this algebra is  generated by sets with Lipschitz regular boundaries. More precisely, we call a relatively compact  open  subanalytic subset $U\subset M$ \emph{an open  subanalytic Lipschitz ball} if its closure is subanalytically bi-Lipschitz homeomorphic to the unit ball of $\R^n$.  
Here we assume that $M$ is equipped with a Riemannian metric.  
Any two such metrics are equivalent on relatively compact sets and hence the above definition 
is independent of the choice of a metric.

\begin{thm}\label{algebra}
The algebra $\AL (M)$ is generated by open subanalytic Lipschitz balls.
\end{thm}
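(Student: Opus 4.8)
\section{Proof proposal}

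The plan is to deduce the statement from the existence of sufficiently regular finite covers of $U$ by open subanalytic Lipschitz balls, and then to invoke inclusion--exclusion. First I would reduce the theorem to a single assertion about individual sets: since the relatively compact open subanalytic sets generate $\AL(M)$ by definition, and every open subanalytic Lipschitz ball is itself relatively compact, open and subanalytic, it suffices to prove that each relatively compact open subanalytic $U\subseteq M$ lies in the subalgebra generated by open subanalytic Lipschitz balls. Thus the whole problem becomes: write an arbitrary such $U$ as a finite Boolean combination (finite unions, intersections and differences) of open subanalytic Lipschitz balls.

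Call a finite family $\{U_i\}$ of open subanalytic Lipschitz balls a \emph{regular cover} of $U$ if $U=\bigcup_i U_i$ and every nonempty intersection $U_{i_0}\cap\cdots\cap U_{i_k}$ is again an open subanalytic Lipschitz ball. Granting such a cover, the inclusion--exclusion identity expresses the characteristic function of $U$ as a finite signed sum of the characteristic functions of the intersections $U_{i_0}\cap\cdots\cap U_{i_k}$; since each intersection is a generator, this exhibits $U$ as a Boolean combination of open subanalytic Lipschitz balls and finishes the argument. Hence the heart of the matter is to produce a regular cover of every relatively compact open subanalytic set.

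I would construct the regular cover by induction on $n=\dim M$, exploiting the local conic structure of subanalytic sets. Fix a subanalytic triangulation of $\overline U$ compatible with $U$; the open stars of its vertices form a topological good cover, whose multiple intersections are again stars of simplices, i.e.\ cells, so at the level of homeomorphism type the combinatorics is exactly what is needed. The content is to realize this picture metrically: near each vertex $p$ the closure $\overline U$ is subanalytically bi-Lipschitz to the cone over its link $L_p$, a compact subanalytic set sitting in an $(n-1)$-dimensional sphere. By the inductive hypothesis applied on $S^{n-1}$ one obtains a regular cover of $L_p$ by open subanalytic Lipschitz $(n-1)$-balls; coning these off (the subanalytic cone over a Lipschitz ball is again a Lipschitz ball, and coning commutes with intersection) turns this into a regular cover of a conic neighborhood of $p$ by $n$-dimensional open subanalytic Lipschitz balls. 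Patching these local covers across the simplices of the triangulation, together with the top-dimensional open stars themselves, would yield the desired global regular cover of $U$.

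The main obstacle is precisely the Lipschitz bookkeeping that the topological sketch hides. Establishing a subanalytic bi-Lipschitz conic structure uniform enough that (i) the cone over a Lipschitz ball is a Lipschitz ball with controlled constants, (ii) the coning operation genuinely sends a regular cover to a regular cover, and (iii) the local covers can be glued over the triangulation while keeping every multiple intersection a Lipschitz ball, is far more delicate than a classical topological triangulation. Equivalently, one must upgrade an ordinary subanalytic triangulation to one whose open stars and all their intersections are subanalytically bi-Lipschitz to balls with uniformly bounded distortion; securing this metric control---rather than the mere combinatorics of the nerve---is where the real work lies, and I expect it to rest on a bi-Lipschitz triangulation or an $L$-regular decomposition of subanalytic sets.
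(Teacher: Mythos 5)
Your approach has a fatal flaw at its foundation: the regular cover you propose to build does not exist in general. Your entire strategy is to write $U=\bigcup_i U_i$ with each $U_i$ an open subanalytic Lipschitz ball contained in $U$ and then apply inclusion--exclusion; but the paper points out, immediately after the statement of Theorem \ref{algebra}, that the cusp $U=\{(x,y)\in \R^2;\ y^2<x^3,\ x<1\}$ admits no finite cover by subanalytic Lipschitz balls. Concretely, if such a cover existed, some $\overline{U_i}$ would contain the origin; since $\overline{U_i}$ is subanalytically bi-Lipschitz to the closed unit disk, the area of $B(0,r)\cap \overline{U_i}$ would be bounded below by $c\,r^2$ for small $r$, whereas the area of $B(0,r)\cap \overline{U}$ is of order $r^{5/2}$ --- a contradiction. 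The same example refutes the key structural claim of your induction: a subanalytic set is in general only \emph{topologically} conic near a point of its closure, not bi-Lipschitz conic. The straight cone over the link of the cusp at the origin is a sector of positive opening angle, which is not bi-Lipschitz equivalent to the cusp, so the step ``$\overline U$ is subanalytically bi-Lipschitz to the cone over its link'' fails. This is not Lipschitz bookkeeping that can be filled in later; the object you are trying to construct cannot exist.

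The paper's proof circumvents exactly this obstruction by abandoning covers altogether: the generating Lipschitz balls are \emph{not} required to lie inside $U$. One first decomposes $U$ into a finite \emph{disjoint} union of L-regular cells (Theorem \ref{dokladny}, Kurdyka's Theorem A), which is possible even in the presence of cusps, and then treats a single open cell $U_{f,g}=\{(x',x_n);\ f(x')<x_n<g(x'),\ x'\in U'\}$ by extending $f\le g$ to globally defined subanalytic Lipschitz functions (Banach's extension formula) and writing $1_{U_{f,g}}=1_{U_{f-1,g}}+1_{U_{f,g+1}}-1_{U_{f-1,g+1}}$. The three sets on the right are genuine Lipschitz balls because their defining functions are separated by a gap of at least $1$; but they overflow $U$, and the cusp of $U$ is produced by cancellation in the signed sum rather than by covering. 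Lower-dimensional cells (graphs) are handled by a similar signed sum over thickenings of the graph. Note finally that covers do appear in the paper, but only in Theorems \ref{forschapira} and \ref{extra}, where the pieces are merely required to be subanalytically \emph{homeomorphic} to balls (or to be open L-regular cells) with a metric control on distances to the boundary --- precisely because covers by bi-Lipschitz balls are unobtainable.
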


%Theorem \ref{algebra} is shown in Section \ref{algebra}.  
That is to say  if $U$  is a relatively compact open subanalytic subset
of $M$ then  the characteristic function $ 1_U$ is a linear combination of functions of the form
$1 _{W_1}, ... ,  1 _{W_m}$, where the $W_j$ are open subanalytic Lipschitz balls.  
Note that, in general, $U$ cannot be covered by subanalytic Lipschitz balls, as it is easy to see 
for $\{(x,y)\in \R^2; y^2<x^3, x<1\}$, $M=\R^2$, due to the presence of cusps.  Nevertheless we show the 
existence of a "regular" cover in the sense that we control the distance to the boundary.

\begin{thm}\label{forschapira}
Let $U\in \AL (M)$.  Then there exist a finite cover  $U= \bigcup _i U_i$ by open 
subanalytic sets such that :
\begin{enumerate}
\item
every $U_i$ is subanalytically homeomorphic to an open $n$-dimensional ball;
\item
there is $C>0$ such that for every  $ x\in U$, $\dist (x, M\setminus U)\leq C \max_i \dist (x,M \setminus U_i)$ 
\end{enumerate}
\end{thm}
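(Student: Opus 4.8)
The plan is to reduce to a local problem in $\R^n$ and to build the cover from a cell decomposition adapted to the distance function, proceeding by induction on $n=\dim M$. Using a finite atlas and the compactness of $\overline U$, it suffices to produce such a cover in a bounded coordinate chart; moreover the region $\{x\in U:\dist(x,M\setminus U)\ge\varepsilon\}$ is compact in $U$, so there the features of $\partial U$ occur at a fixed scale and any fixed finite triangulation into open balls already satisfies (2) with a constant depending on $\varepsilon$. Hence the real content is to cover a one-sided neighbourhood of $\partial U$ inside $U$ with (2) holding for a \emph{uniform} constant, and then to glue this with the bulk cover. Note that Theorem \ref{algebra} provides a stronger local model (Lipschitz balls), but these are not available as a cover because of cusps, which is exactly why (1) only asks for a homeomorphism.

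For the boundary part I would fix a generic linear projection $\pi:\R^n\to\R^{n-1}$ and take a cylindrical, L-regular decomposition of $\overline U$ compatible with $U$ and $\partial U$, the base being decomposed by the inductive hypothesis in $\R^{n-1}$. Over each base cell $C'$ the open pieces of $U$ are bands $B=\{(y,t):y\in C',\ \xi_j(y)<t<\xi_{j+1}(y)\}$ between consecutive continuous subanalytic section functions. Each such band is subanalytically homeomorphic to $C'\times(0,1)$, hence to an open $n$-ball once $C'$ is, which gives (1). The decisive structural point is that the top and bottom walls $t=\xi_j(y)$ and $t=\xi_{j+1}(y)$ of every band consist of points of $\partial U$, since the endpoints of the fibre $U\cap\pi^{-1}(y)$ lie on $M\setminus U$. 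Consequently these walls contribute equally to $\dist(x,\partial B)$ and to $\dist(x,M\setminus U)$, and — provided $\pi$ is transverse enough to $\partial U$ at $x$ that the Euclidean distance to a wall is comparable to the vertical distance $\min(t-\xi_j(y),\xi_{j+1}(y)-t)$ — this already controls $\dist(x,M\setminus U)$ from below by the band's vertical half-width.

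The obstruction, and the heart of the proof, is twofold. First, a single $\pi$ may be \emph{tangent} to $\partial U$ along entire sets, so that the vertical half-width of $B$ grossly overestimates $\dist(x,M\setminus U)$. I would resolve this by replacing $\pi$ with a finite family of \emph{regular} projections $\pi_1,\dots,\pi_N$, chosen so that every point of a neighbourhood of $\partial U$ is sufficiently transverse to at least one $\pi_\alpha$; the cover is then the union over $\alpha$ of all bands, and at each $x$ some band realises $\dist(x,M\setminus U)$ up to a constant. The quantitative comparison between the cylindrical vertical distance, the Euclidean distance to a wall, and $\dist(x,M\setminus U)$ is supplied by the \L ojasiewicz inequality; this is precisely what upgrades the merely topological straightening of an L-regular cell (not bi-Lipschitz, because of cusps) to the metric estimate (2).

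Second, the internal \emph{side walls} lying over $\partial C'$ are generally not contained in $\partial U$ and may pass close to a point deep inside $U$. To eliminate them I would \emph{merge} adjacent bands that share a side wall not contained in $\partial U$ into a single band over the enlarged base, iterating the fusion until the only surviving side walls lie on $\partial U$ or the fusion is obstructed by the topology of $U$ (the annulus being the model obstruction). At such places one keeps the cover finite and (1) intact by introducing a controlled \emph{overlap}, using two of the regular projections, or two distinct maximal fusion patterns, so that every point lies deep inside at least one ball-homeomorphic piece whose only nearby walls are pieces of $\partial U$. The main technical difficulty I anticipate is bookkeeping the constant $C$ uniformly through the induction on dimension, combining the side-wall control coming from the $(n-1)$-dimensional cover of the base with the fibrewise top and bottom control, and verifying that the fused pieces remain subanalytically homeomorphic to balls; both rest on the \L ojasiewicz inequalities and on the comparability of the inner and outer metrics of L-regular cells.
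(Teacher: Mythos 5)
Your overall architecture does match the paper's: induction on $n$, a finite family of regular projections to deal with tangency, cylindrical decomposition over a base cover supplied by the inductive hypothesis, and the observation that the top and bottom walls of each band lie in $\partial U$, so only the side walls can spoil the estimate. You also correctly isolate the side walls as the crux. But your resolution of that crux --- merging adjacent bands until all surviving side walls lie in $\partial U$, with ``controlled overlaps'' where fusion is topologically obstructed --- is a genuine gap, and it is also unnecessary. Merged regions need not remain subanalytically homeomorphic to balls; fusion across the discriminant $\Delta_\pi$ is not even well defined, since the sheet structure of $X=\partial U$ over the base changes there; and no quantitative estimate is given for why every point is deep inside some merged or overlapped piece. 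Invoking the \L ojasiewicz inequality cannot repair this, because without regularity of the projection the needed comparison is simply false: if $p$ is a fold point of $\pi|_{X}$ and $x_0$ lies over $\pi(p)$ with $\dist(x_0,X)$ large, then $\pi(x_0)$ is arbitrarily close to $\Delta_\pi$, so \emph{every} cylinder for that projection (the bases being confined to $\pi(U)\setminus\Delta_\pi$) has $x_0$ arbitrarily close to a side wall. One must switch to a projection that is regular at $x_0$, and one must know quantitatively what regularity buys; your proposal gestures at the switch (``using two of the regular projections'') but never supplies the quantitative mechanism.

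That mechanism is the paper's inequality \eqref{bound2}: if $\pi$ is $(C,\varepsilon)$-regular at $x_0$, then $\dist(x_0, X\setminus \Cn)\le \tilde C\, \dist(x_0',\Delta_\pi)$, because the singular points of $X$ and the critical points of $\pi|_{Reg(X)}$ all lie outside the cone $\Cn$. Contrapositively, a point far from $X$ projects to a point proportionally far from $\Delta_\pi$. This is then fed into the inductive property (2) of the base cover of $\pi(U)\setminus\Delta_\pi$: since $\partial\pi(U)\subset\Delta_\pi$, the distance from $x_0'$ to the complement of $\pi(U)\setminus\Delta_\pi$ is exactly $\dist(x_0',\Delta_\pi)$, so some base cell $U_i'$ satisfies $\dist(x_0',\Delta_\pi)\le \tilde C\,\dist(x_0',\partial U_i')$; by \eqref{distance2} the band over $U_i'$ containing $x_0$ then has $\dist(x_0,\partial U_1)\ge (\tilde C)^{-2}\dist(x_0,X)$ (Lemma \ref{lem_induction}). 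Hence the cover consisting of all bands of all $n+1$ regular projections already satisfies (1) and (2): no merging, no separate bulk/boundary treatment, no \L ojasiewicz inequality, and no inner-versus-outer metric comparison for L-regular cells (the latter enters only in Theorems \ref{extra} and \ref{algebra}). The ingredients you list in your last paragraph as ``technical difficulties'' --- combining side-wall control from the base cover with the regularity of the projection --- are precisely the missing proof, and they are the entire content of the paper's argument.
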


 The proofs 
of Theorems \ref{algebra} and \ref{forschapira} 
are based on the regular projection theorem, cf. \cite{mostowski}, \cite{parusinski1988}, \cite{parusinski1994},  the 
classical cylindrical decomposition, and the L-regular decomposition of subanalytic sets, cf. \cite{kurdyka}, 
\cite{parusinski1994}, \cite{pawlucki}.  L-regular sets  are natural multidimensional generalization of classical cusps.  We recall them briefly in Subsection \ref{Lregular}.    We show also the following strengthening of Theorem \ref{forschapira}.

\begin{thm}\label{extra}
In Theorem \ref{forschapira} we may require additionally that all $U_i$ are open L-regular cells (i.e. interiors of $L$-regular sets).  
\end{thm}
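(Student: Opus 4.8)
The plan is to inspect the proof of Theorem \ref{forschapira} and to check that each covering set produced there is, in appropriate analytic coordinates, an open $L$-regular cell; the strengthening to Theorem \ref{extra} then needs no essentially new construction, only the verification that the $L$-regular structure is retained. The starting observation is that condition (1) is automatic for such cells: an $n$-dimensional open $L$-regular cell is a band $\{(x',t):x'\in C',\ \varphi(x')<t<\psi(x')\}$ over an open $(n-1)$-dimensional $L$-regular base $C'$, bounded by the graphs of $L$-regular functions $\varphi<\psi$, so rescaling the last coordinate identifies it subanalytically with $C'\times(0,1)$, and induction on $n$ produces the subanalytic homeomorphism with the open ball.

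First I would recall the shape of the $L$-regular decomposition used in Theorem \ref{forschapira}. Fixing a coordinate projection $\pi\colon M\to\R^{n-1}$ supplied by the regular projection theorem, one decomposes the base into $L$-regular cells $C_\alpha$ over each of which $\partial U$ is a disjoint union of graphs of $L$-regular functions with a uniformly bounded Lipschitz constant $L$; the open $n$-cells of the induced decomposition of $U$ are the bands lying between consecutive such graphs. Each band has two kinds of walls: its floor and ceiling, which are pieces of $\partial U$, and its vertical sides, which sit over $\partial C_\alpha$ and are internal to $U$. Because floor and ceiling are $L$-Lipschitz, the distance from an interior point $x$ to them agrees with the vertical gap up to a factor depending only on $L$, so the genuine boundary $\partial U$ is faithfully detected in the projection direction.

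The key step is to use the regular projection theorem to select a finite family of directions such that for every $x\in U$ some chosen projection is transverse to $\partial U$ at the point of $M\setminus U$ nearest to $x$. For such a direction the vertical gap from $x$ to the bounding graph is comparable to $\dist(x,M\setminus U)$, while transversality keeps the projection of $x$ in the interior of its base cell $C_\alpha$, hence away from the internal side walls. Thus the open $L$-regular band containing $x$ for that direction has $x$ far from all of its walls, and one obtains $\dist(x,\partial U_i)\geq c\,\dist(x,M\setminus U)$ with a uniform $c>0$. Taking the union, over the chosen directions, of all open $n$-dimensional $L$-regular cells therefore produces a family satisfying (2), and by the first paragraph each member satisfies (1).

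The main obstacle, and the delicate point inherited from the proof of Theorem \ref{forschapira}, is to guarantee that the finitely many directions delivered by the regular projection theorem suffice at \emph{every} point of $U$---including those whose nearest boundary point is singular---and that the resulting open cells actually cover $U$ rather than omitting the internal walls of the individual decompositions. Resolving this requires enlarging the family of directions until the locus common to all the internal wall sets drops in dimension, and then controlling that lower-dimensional locus by a descending induction, all the while preserving both the $L$-regularity of the cells and the distance estimate (2). Executing this descent through the dimensions, simultaneously with the transversality and distance requirements, is the heart of the argument and is exactly what the proof of Theorem \ref{forschapira} supplies.
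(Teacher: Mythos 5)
Your proposal has a genuine gap at its core: you assume that the cover produced in the proof of Theorem \ref{forschapira} already consists of cylinders bounded by graphs with a \emph{uniformly bounded} Lipschitz constant over L-regular base cells, so that only a "verification" remains. That is false in general. In the proof of Theorem \ref{forschapira} the induction is applied to all of $\pi(U)\setminus \Delta_\pi$, and over that set the functions $\varphi_i$ of Proposition \ref{cylinders} are analytic but their derivatives blow up as one approaches $\Delta_\pi$ (at fold points of $X=\partial U$, where the tangent hyperplane of $X$ contains the projection direction); hence the resulting cylinders are \emph{not} L-regular cells, and no refinement of the base decomposition can repair this for a fixed projection. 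This is exactly why the paper does not reuse the cover from Theorem \ref{forschapira}: it introduces the set $U'(C_1)$ of $C_1$-regularly covered points (base points over which all branches of $X$ are graphs with first-order derivatives bounded by $C_1$), applies the inductive hypothesis to $U'(C_1)$ rather than to $\pi(U)\setminus\Delta_\pi$, and only then takes the cylindrical decomposition, whose members are now L-regular by construction.

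Shrinking the base to $U'(C_1)$ creates the second problem your proposal does not address: the distance estimate (2) must be re-proved, since $\partial U'(C_1)$ can be much closer to $\pi(x_0)$ than $\Delta_\pi$ is. The paper handles this with a new lemma (Lemma \ref{lem}): for $x_0$ at which $\pi$ is $(C,\varepsilon)$-regular, either $X\setminus \Cn$ is empty over the ball $V'$ of radius $(\tilde C)^{-1}\dist(x_0, X\cap\Cn)$ around $\pi(x_0)$, in which case $V'\subset U'(C_1)$ and the distance to $X$ is controlled through the induction, or else $\dist(\pi(x_0),\Delta_\pi)\le \dist(\pi(x_0),\partial U'(C_1))$, in which case Lemma \ref{lem_induction} applies. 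Your substitute for this step --- that "transversality keeps the projection of $x$ in the interior of its base cell, hence away from the internal side walls" --- does not follow from the regular projection theorem: $(C,\varepsilon)$-regularity of $\pi_\xi$ at $x$ is a statement about the cone $\Cn_\varepsilon(x,\xi)$ meeting $X$ in well-behaved graphs, and gives no lower bound on the distance from $\pi(x)$ to the boundary of whichever base cell contains it; that bound is precisely what the inductive hypothesis in dimension $n-1$ together with Lemma \ref{lem} must supply. So the two genuinely new ingredients of the paper's proof --- the set $U'(C_1)$ and Lemma \ref{lem} --- are both absent from your argument, and deferring them to "what the proof of Theorem \ref{forschapira} supplies" does not work, because that proof supplies neither.
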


For an open $U\subset M$ we denote $\partial U = \overline U \setminus U$.  

%%%%%%%%%%%%%%%%%%%%%%%%%%%%%%%%%%%%%%%%%%%%

\section{Proofs}

\subsection{Reduction  to the case $M=\R^n$.}  Let $U\in \AL (M)$.   Choose a finite covering 
 $\overline U \subset \bigcup_i V_ i$  by open relatively compact sets such that for each $V_i$ 
 there is an open neighborhood of $\overline V_i$  analytically  diffeomorphic to $\R^n$.   Then there 
 are finitely many open subanalytic $U_{ij}$ such that $U_{ij}\subset V_i$ and $1_U$ is a combination 
 of $1_{U_{ij}}$.   Thus it suffices to show Theorem \ref{algebra} for relatively compact open subanalytic subsets of 
 $R^n$.  
 
 Similarly, it suffices to show Theorems \ref{forschapira} and \ref{extra} for $M=\R^n$.  Indeed, it follow from the observation that the function 
 $$
 x \to  \max_i \dist (x,M \setminus V_i)
 $$
 is continuous  and nowhere zero on $\bigcup_i V_ i$ and hence bounded from below by a nonzero constant $c>0$ 
 on $\overline U$.  Then 
 \begin{align*}
 \dist (x, M\setminus U)  \le & C_1 \le c^{-1} C_1  \max_i \dist (x,M \setminus V_i) 
 \end{align*}
 where  $C_1$ is the diameter of $\overline U$ and hence, if $c^{-1} C_1\ge 1$, 
  \begin{align*}
 \dist (x, M\setminus U)  \le  c^{-1} C_1  \max_i( \min \{  \dist (x, M\setminus U) , \dist (x,M \setminus V_i)\}.  
 \end{align*}
   Now if for each $U\cap V_i$ we choose a covering $U_{ij}$ satisfying the statement of Theorem 
 \ref{forschapira} or \ref{extra} then for $x\in U$ 
 \begin{align*}
 \dist (x, M\setminus U) &  \le c^{-1} C_1  \max_i( \min \{  \dist (x, M\setminus U) , \dist (x,M \setminus V_i)\} \\
 &  \le c^{-1} C_1   \max _i \dist (x,M \setminus U \cap V_i) \leq  Cc^{-1} C_1 \max_{ij} \dist (x,M \setminus U_{ij})
 \end{align*}
Thus the cover $U_{ij}$ satisfies the claim of Theorem 
 \ref{forschapira}, resp. Theorem \ref{extra}.   

\subsection{Regular projections } 
We recall after  \cite{parusinski1988}, \cite{parusinski1994} the subanalytic version of the regular projection theorem of T. Mostowski introduced originally in 
\cite{mostowski} for complex analytic sets germs.  

Let $X\subset \R^n$ be subanalytic.  For $\xi \in \R^{n-1}$ we  denote by   $\pi_\xi: \R^n\to \R^{n-1}$ the linear projection parallel to 
$(\xi,1) \in \R^{n-1} \times \R$.  Fix constants $C, \varepsilon >0$.   We say that $\pi = \pi_\xi$ is \emph{($C, \varepsilon $)-regular at $x_0\in \R^n$ (with respect to $X$)} if 
\begin{enumerate}
\item[(a)]
$\pi|_X$ is finite;
\item[(b)]
the intersection of $X$ with the open cone 
\begin{align}\label{cone}
\Cn_\varepsilon (x_0, \xi) = \{ x_0 + \lambda (\eta,1); |\eta -\xi |<\varepsilon, \lambda \in \R \setminus 0\}
\end{align}
is empty or a finite disjoint union of sets of the form 
$$
 \{ x_0 + \lambda_i (\eta) (\eta,1); |\eta -\xi |<\varepsilon\} ,
$$
where $\lambda_i$ are real analytic nowhere vanishing functions defined on $ |\eta -\xi |<\varepsilon$.  
\item [(c)]
the functions $\lambda_i$ from (b) satisfy for all $ |\eta -\xi |<\varepsilon$
$$
\| \grad \lambda_i (\eta) \| \le C |\lambda_i (\eta)|,
$$
\end{enumerate}
We say that $\mathcal P \subset \R^{n-1}$ defines  \emph{a set of regular projections for $X$} if there exists 
$C, \varepsilon >0$ such that  for every $x_0\in \R^n$ there is $\xi \in \mathcal P$ such that 
$\pi_\xi$ is $(C, \varepsilon )$-regular at $x_0$.   

\begin{thm}[\cite{parusinski1988}, \cite{parusinski1994}]
Let $X$ be a compact subanalytic subset of $\R^n$ such that $\dim X<n$. Then the generic set  of $n+1$ vectors 
$\xi_1, \ldots, \xi_{n+1}$, $\xi _i\in \R^{n-1}$, 
defines a set of regular projections for $X$.  \\
(Here by generic we mean in the complement of a subanalytic nowhere  dense subset of $(\R^{n-1})^{n+1}$.)
\end{thm}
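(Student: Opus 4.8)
The plan is to fix uniform constants $C,\varepsilon>0$, to measure the size of the ``bad'' set of directions at each point, and then to run a dimension count in the parameter space $(\R^{n-1})^{n+1}$. Set
\[
\Sigma=\{(x_0,\xi)\in\R^n\times\R^{n-1}\ :\ \pi_\xi \text{ is not }(C,\varepsilon)\text{-regular at } x_0 \text{ with respect to } X\}.
\]
The finiteness condition (a) depends on $\xi$ alone: the set $F\subset\R^{n-1}$ of directions for which $\pi_\xi|_X$ has an infinite fibre is subanalytic and, since $\dim X<n$, nowhere dense, so I discard it first and concentrate on (b) and (c). Writing $\Sigma_{x_0}=\{\xi:(x_0,\xi)\in\Sigma\}$ for the slice of bad directions at $x_0$, the heart of the argument is the estimate $\dim\Sigma_{x_0}\le n-2$, valid for a single uniform choice of $(C,\varepsilon)$. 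Granting this, consider
\[
\widetilde B=\{(x_0,\xi_1,\dots,\xi_{n+1})\ :\ \xi_j\in\Sigma_{x_0}\text{ for all } j\}=\textstyle\bigcap_{j} p_j^{-1}(\Sigma),
\]
where $p_j(x_0,\xi_1,\dots,\xi_{n+1})=(x_0,\xi_j)$, and let $B$ be its image under the projection forgetting $x_0$. Then $\dim\widetilde B\le n+(n+1)(n-2)=n^2-2$, hence $\dim B\le n^2-2<n^2-1=\dim(\R^{n-1})^{n+1}$, so $B$ is nowhere dense. This computation also explains the number $n+1$: with only $k$ directions one gets $\dim B\le n+k(n-2)$, which is $<k(n-1)$ exactly when $k>n$. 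For a tuple $(\xi_1,\dots,\xi_{n+1})$ outside $B$ and with each $\xi_j\notin F$, every $x_0$ admits some $\xi_j\notin\Sigma_{x_0}$, i.e. $\pi_{\xi_j}$ is $(C,\varepsilon)$-regular at $x_0$; this is precisely the assertion that $\{\xi_1,\dots,\xi_{n+1}\}$ defines a set of regular projections.

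For the fibre estimate I would pass to the spherical blow-up of $\R^n$ at $x_0$, writing $x=x_0+r\omega$ with $r>0$, $\omega\in S^{n-1}$; in the chart $\omega\leftrightarrow(\eta,1)/|(\eta,1)|$ the sheets $\lambda_i(\eta)$ of (b) become the branches $r=r_i(\omega)$ of the subanalytic set $\widetilde X$, and condition (c) is equivalent, up to a bounded factor, to the log-Lipschitz bound $\|\grad\log r_i\|\le C'$, the gradient being taken in the angular variable $\omega$. The bad directions then split into three subanalytic pieces, each of dimension $\le n-2$: (i) the critical values of the projection $\widetilde X\to S^{n-1}$, where the number or the analyticity of the sheets fails --- these have dimension $\le n-2$ by the subanalytic Sard theorem when $\dim\widetilde X=n-1$, while if $\dim\widetilde X\le n-2$ the entire image of $\widetilde X$ in $S^{n-1}$ already has dimension $\le n-2$; (ii) the directions of the tangent cone $C_{x_0}X$ along which some sheet satisfies $r_i\to 0$, which form the link of a cone of dimension $\le\dim X\le n-1$ and hence have dimension $\le n-2$; and (iii) the directions where the log-gradient bound in (c) breaks down. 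Controlling (iii) with a uniform $C'$ is the crux: here I would invoke the $L$-regular decomposition (equivalently the subanalytic preparation theorem) to decompose $S^{n-1}$ into finitely many subanalytic cells on which each $r_i$ is an $L$-regular function, so that $\|\grad\log r_i\|$ is bounded on each open cell, the failure being confined to the lower-dimensional cell walls together with the discriminant from (i).

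The remaining point is the uniformity of $(C,\varepsilon)$ as $x_0$ varies, together with the subanalyticity of $B$. Both rest on the compactness of $X$: the family of decompositions above can be taken subanalytic in $x_0$, so that only finitely many combinatorial types occur and a single pair $(C,\varepsilon)$ works for all $x_0$; moreover, for $x_0$ outside a large ball containing $X$ the projection $\pi_\xi$ degenerates near $X$ to a parallel projection, which is already regular for generic $\xi$, so it suffices to let $x_0$ range over a compact set, over which each $p_j$ is proper and the images $\Sigma$, $\widetilde B$, $B$ are subanalytic. I expect step (iii) --- the uniform logarithmic gradient estimate, which is the genuine content of Mostowski's regular projection theorem \cite{mostowski} --- to be the main obstacle, whereas the genericity count and the treatment of (a) and (b) are comparatively formal consequences of subanalytic Sard-type and dimension arguments.
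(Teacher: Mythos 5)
You should first note that the paper does not prove this theorem at all: it is quoted, with attribution, from \cite{parusinski1988} and \cite{parusinski1994}, so there is no internal proof to compare against and your argument has to stand on its own. Its outer skeleton (the product dimension count, and the computation showing why $n+1$ directions suffice and $n$ do not) is the right and standard frame. But the step you call the heart of the argument --- that for a single pair $(C,\varepsilon)$ fixed in advance one has $\dim\Sigma_{x_0}\le n-2$ --- is false, and the counting argument collapses with it. Take $n=2$, $X=\{(x,x^2):|x|\le 1\}$ and $x_0=(a,a^2)$ with $0<a<1$. The line through $x_0$ in direction $(\eta,1)$ meets $X\setminus\{x_0\}$ at $\lambda(\eta)=(1-2a\eta)/\eta^2$, which vanishes at the tangent direction $\xi_0=1/(2a)$ while $\lambda'(\xi_0)=-8a^3\ne 0$; since $|\lambda(\eta)|\approx 8a^3|\eta-\xi_0|$ near $\xi_0$, the inequality $|\lambda'(\eta)|\le C|\lambda(\eta)|$ fails on the whole interval $|\eta-\xi_0|<1/C$ (approximately), so every $\xi$ in that interval is $(C,\varepsilon)$-bad. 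Thus $\Sigma_{x_0}$ contains an open interval, i.e.\ it has dimension $n-1$ for every fixed choice of constants; the same happens for $x_0$ off $X$, where conditions (b) and (c) fail on $\varepsilon$-, respectively $1/C$-, neighborhoods of the fold (discriminant) directions. The point is an order-of-quantifiers issue: in the definition of a set of regular projections, $(C,\varepsilon)$ are chosen \emph{after} the collection $\mathcal P$. What is true, and what any correct proof must use, is that the set of directions that are bad \emph{for every} $(C,\varepsilon)$ has dimension $\le n-2$; your dimension count then applies to that limit set, and the genuinely hard remaining step is to show that a generic tuple admits one uniform pair $(C,\varepsilon)$ valid simultaneously for all $x_0\in\R^n$. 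That uniformity --- a definability/{\L}ojasiewicz-type statement over a noncompact range of $x_0$ --- is exactly what your ``finitely many combinatorial types'' remark asserts without proof.

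There is a second, related gap in your step (iii). L-regular decomposition (or quasi-convexity) bounds $\|\grad r_i\|$ on open cells; it does not bound $\|\grad \log r_i\|=\|\grad r_i\|/r_i$, and the two differ precisely where it matters, namely at directions along which $r_i$ becomes small, i.e.\ near the tangent cone of $X$ at $x_0$. The set where the naive bound $\|\grad r_i\|\le C'$ fails to imply $\|\grad r_i\|\le C r_i$ is an open neighborhood of the tangent link, not a set of codimension $\ge 1$, so it cannot be discarded in your count --- this is the same phenomenon as the parabola example above seen in the blown-up coordinates. The logarithmic gradient estimate near vanishing directions is the actual analytic content of Mostowski's theorem, and in the subanalytic setting it is obtained from preparation-theorem-type results (expressing $r_i$ as a unit times powers of distances to lower-dimensional centers), not from L-regularity. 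So both the genericity frame and the fiber estimate need to be rebuilt with the constants quantified after the tuple of directions; as it stands, the proposal proves a statement that is false and leaves the true statement's main difficulty untouched.
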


\subsection{Cylindrical decomposition }  
We recall the first step of a basic construction, the   cylindrical algebraic decomposition, for details see for instance \cite{coste}, \cite{vandendries}. 

Set $X=\overline U \setminus U$.  Then $X$ is a compact subanalytic subset of $\R^n$ of dimension 
$n-1$.  We denote by $Z\subset X$ the set of singular points of $X$ that is the complement in $X$ of the set 
$$
Reg(X) : = \{x\in X; (X,x) \text{ is the germ of a real analytic submanifold of dimension } n-1\}. 
$$
Then $Z$ is closed in $X$, subanalytic and $\dim Z\le n-2$.  

Assume that  the standard projection $\pi : \R^n \to \R^{n-1}$ restricted to $X$ is finite.  Denote by 
$\Delta_\pi\subset \R^{n-1}$ the union of $\pi (Z)$ and the set of critical values of $\pi|_{Reg(X)}$.   Then $\Delta_\pi$, called \emph{the 
discriminant set of $\pi$}, is compact and subanalytic.  It is clear that $\overline {\pi(U)}= \pi(U) \cup \Delta_\pi$.  

\begin{prop} \label{cylinders}
Let $U'\subset \pi(U) \setminus \Delta_\pi$ be open and connected.   Then there are finitely many bounded real analytic functions 
$\varphi_1 <\varphi _2< \cdots < \varphi_k $ defined on $U'$, such that $X\cap \pi^{-1}(U')$ is the union of graphs of $\varphi_i$'s.  In particular,  $U\cap \pi^{-1}(U')$ is the union of the sets 
$$
\{(x',x_n) \in \R^n; x'\in U',  \varphi_i (x') < x_n <  \varphi_{i+1} (x'),\}
$$
and moreover, if $U'$ is subanalytically homeomorphic to an open $(n-1)$-dimensional ball, then each of these sets is 
subanalytically homeomorphic to an open $n$-dimensional ball.  
\end{prop}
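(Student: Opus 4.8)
The plan is to exploit that $U'$ avoids the discriminant $\Delta_\pi$, which forces $\pi|_X$ to be especially simple over $U'$. First I would observe that for every $x'\in U'$ and every point $p$ of $X$ lying over $x'$ we have $p\in Reg(X)$ (because $x'\notin\pi(Z)$) and $p$ is a noncritical point of $\pi|_{Reg(X)}$ (because $x'$ is not a critical value). Since $\dim X=n-1=\dim\R^{n-1}$, the projection maps a neighbourhood of $p$ in $X$ diffeomorphically onto a neighbourhood of $x'$, and by the implicit function theorem this neighbourhood is the graph of a real analytic function of $x'$. Because $\pi|_X$ is finite, the fibre $X\cap\pi^{-1}(x')$ is a finite set of such points, and finiteness prevents further sheets from accumulating, so the fibre cardinality is locally constant on $U'$; as $U'$ is connected it equals a constant $k$.

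Next I would define the functions. For each $x'\in U'$ I order the $k$ fibre points by their last coordinate and set $\varphi_1(x')<\cdots<\varphi_k(x')$. Since the $k$ values are always distinct, the ordering is preserved under small perturbations of $x'$, so each $\varphi_i$ coincides locally with one of the analytic local branches constructed above and is therefore real analytic on all of $U'$. The $\varphi_i$ are bounded because $X\subset\overline U$ is compact, and subanalytic because their graphs are unions of connected components of the subanalytic set $X\cap\pi^{-1}(U')$. By construction $X\cap\pi^{-1}(U')=\bigcup_i\mathrm{graph}(\varphi_i)$, which is the first assertion.

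For the second assertion, fix $x'\in U'$: the line $\pi^{-1}(x')$ is divided by $\varphi_1(x')<\cdots<\varphi_k(x')$ into two unbounded intervals and the bounded open intervals $(\varphi_i(x'),\varphi_{i+1}(x'))$. Each band
$$
B_i=\{(x',x_n);\ x'\in U',\ \varphi_i(x')<x_n<\varphi_{i+1}(x')\}
$$
is connected (it fibres over the connected set $U'$ with interval fibres) and disjoint from $X=\partial U$, hence lies entirely in $U$ or entirely in $\R^n\setminus\overline U$; the two unbounded intervals lie in $\R^n\setminus\overline U$ since $U$ is bounded. Thus $U\cap\pi^{-1}(U')$ is exactly the union of those bands $B_i$ contained in $U$, and this union is nonempty because $U'\subset\pi(U)$.

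It remains to identify each band with an $n$-ball. I would straighten $B_i$ by the explicit map
$$
\Phi(x',s)=\bigl(x',\,(1-s)\varphi_i(x')+s\,\varphi_{i+1}(x')\bigr),\qquad (x',s)\in U'\times(0,1),
$$
whose inverse $(x',x_n)\mapsto(x',(x_n-\varphi_i(x'))/(\varphi_{i+1}(x')-\varphi_i(x')))$ is again given by subanalytic formulas, so $\Phi$ is a subanalytic homeomorphism $U'\times(0,1)\to B_i$. If $U'$ is subanalytically homeomorphic to the open $(n-1)$-ball, then $B_i$ is subanalytically homeomorphic to $B^{n-1}\times(0,1)$, and it remains only to note that this bounded open convex semialgebraic set is subanalytically homeomorphic to the open $n$-ball (for instance by composing the semialgebraic radial homeomorphisms $B^{n-1}\cong\R^{n-1}$, $(0,1)\cong\R$ and $\R^n\cong B^n$). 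The main obstacle is the global coherence in the first two steps, namely showing that the ordered roots $\varphi_i$ are genuinely analytic over all of $U'$ and that the slab status is constant on $U'$, together with checking that the final ball identification is realized subanalytically rather than merely topologically.
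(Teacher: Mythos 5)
Your proof is correct. One point of comparison is worth making explicit: the paper does not actually prove Proposition \ref{cylinders} at all --- it is presented as a recollection of the first step of the classical cylindrical decomposition, with the reader referred to the textbook accounts of Coste and of van den Dries. So your argument is not an alternative to the paper's proof but a correct filling-in of the standard one: local graph structure at points of $Reg(X)$ lying over non-discriminant values, local constancy of the fibre count of $\pi|_X$ (which, as you essentially use, follows from compactness of $X = \overline U\setminus U$, properness being what really prevents sheets from accumulating), ordering of the branches to get globally analytic $\varphi_i$, and connectedness of each band forcing it to lie in $U$ or outside $\overline U$. You also correctly handle a point where the proposition's literal wording is too strong: as your two-stacked-disks type example shows, not every band between consecutive graphs lies in $U$, so $U\cap\pi^{-1}(U')$ is the union of \emph{some} of these sets (those contained in $U$), which is what the paper actually needs and uses. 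Two small remarks: for the ``moreover'' clause your identification of the band with $U'\times(0,1)$ to be subanalytic requires $U'$ itself to be subanalytic (so that the graphs of the $\varphi_i$, being connected components of the bounded subanalytic set $X\cap\pi^{-1}(U')$, are subanalytic); this is implicit in the hypothesis that $U'$ is subanalytically homeomorphic to a ball and holds in the paper's application, where $U'$ comes from the inductive cover. And your final semialgebraic identification $B^{n-1}\times(0,1)\cong B^n$ is fine, since a semialgebraic map between bounded sets is subanalytic and all intermediate compositions involve only bounded sets or semialgebraic maps.
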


\subsection{The case of a regular projection}
Fix $x_0\in U$ and suppose that $\pi : \R^n \to \R^{n-1}$ is $(C, \varepsilon )$-regular at $x_0\in \R^n$ with respect to $X$.  
Then the cone \eqref{cone} contains no point of $Z$.  By  \cite{parusinski1994} Lemma 5.2, this cone contains no critical point 
of $\pi|_{Reg(X)}$, provided $\varepsilon$ is chosen sufficiently small (for fixed $C$).  In particular, $x'_0=\pi (x_0) \not\in \Delta_\pi$.  

In what follows we fix $C,\varepsilon >0$ and suppose $\varepsilon$ small.  We denote the cone  \eqref{cone}  by $\Cn$ for short.  Then for $\tilde C$ sufficiently large, that depends only on $C$ 
and $\varepsilon$,  we have 
\begin{align}\label{bound2}
 \dist (x_0, X\setminus \Cn ) \le \tilde C  \dist (x'_0, \pi (X\setminus \Cn )) \le \tilde C \dist (x'_0, \Delta_\pi) .
\end{align}
The first inequality is obvious, the second follow from the fact that the singular part of $X$ and the 
critical points of $\pi|_{Reg(X)}$ are both outside the cone.  

%%%%%%%%%%%%%%%%%%%%%%%%%%%%%%%%%%%%%%%%%%%%%%%%%%%%

\subsection{Proof of Theorem \ref{forschapira}} 
Induction on $n$.  Set $X=\overline U \setminus U$ and let $\pi_{\xi_1}, \ldots , \pi_{\xi_{n+1}}$ be 
a set of $(C,\varepsilon)$-regular projections with respect to $X$.  To each of these projections we apply the cylindrical decomposition.  
More precisely, let us fix one of these projections that for simplicity we suppose standard and denote it by $\pi$.  
Then we 
apply the inductive assumption to $\pi (U) \setminus \Delta_\pi$.  Thus  let  
$\pi (U) \setminus \Delta_\pi= \bigcup U'_i$ be a finite cover satisfying the statement of 
Theorem \ref{forschapira}.   Applying  to each 
$U'_i$  Proposition \ref{cylinders} we obtain a family of cylinders that cover $U\setminus \pi^{-1} (\Delta_\pi)$.  
In particular they cover the set of those points of $U$ at which $\pi$ is $(C,\varepsilon)$-regular.

\begin{lem}\label{lem_induction}
Suppose $\pi$ is $(C,\varepsilon)$-regular at $x_0\in U$.  Let $U'$ be an open subanalytic subset of $\pi(U)\setminus \Delta_\pi$ such that $x_0' = \pi(x_0)\in U'$ and 
\begin{align}\label{indassumption}
\dist (x'_0, \Delta_\pi)\le \tilde C  \dist (x'_0, \partial U'),
\end{align}
with $\tilde C\ge 1$ for which \eqref{bound2} holds.  Then 
\begin{align}\label{main}
\dist (x_0, X)\le (\tilde C)^2  \dist (x_0, \partial U_1),
\end{align}
where $U_1$ is the member of cylindrical decomposition of $U\cap \pi^{-1} (U')$ containing $x_0$.  
\end{lem}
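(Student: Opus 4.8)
\textbf{Proof plan for Lemma \ref{lem_induction}.}

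The plan is to estimate $\dist(x_0, X)$ from above by bounding it in terms of how far $x_0$ sits from the relevant cylinder walls, and then to transfer the known horizontal control \eqref{indassumption} on $U'$ up to a vertical control in $U_1$. First I would split $\dist(x_0, X)$ according to whether the nearest point of $X$ lies inside or outside the regular cone $\Cn = \Cn_\varepsilon(x_0,\xi)$. The outside contribution is already handled: by \eqref{bound2} we have $\dist(x_0, X\setminus\Cn)\le \tilde C\,\dist(x_0',\Delta_\pi)$, and by hypothesis \eqref{indassumption} this is at most $(\tilde C)^2\,\dist(x_0',\partial U')$. So the first step is to observe that the distance to $\partial U'$ in the base pulls back to a distance to $\partial U_1$ upstairs: since $U_1$ lies over $U'$ (it is a cylinder $\{\varphi_i<x_n<\varphi_{i+1}\}$ over $U'$ from Proposition \ref{cylinders}), moving $x_0'$ horizontally toward $\partial U'$ moves $x_0$ toward $\partial U_1$ by at least as much, giving $\dist(x_0',\partial U')\le \dist(x_0,\partial U_1)$. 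This disposes of the ``outside the cone'' part of $X$ with the claimed constant $(\tilde C)^2$.

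The remaining, and genuinely new, part of the argument concerns points of $X$ inside the cone $\Cn$. Here the key structural fact is that inside the cone $X$ is, by regularity condition (b), a disjoint union of graphs of the real analytic functions $\lambda_i(\eta)(\eta,1)$, and in particular the pieces of $X$ met by the vertical line through $x_0$ are exactly the graphs $\varphi_j$ bounding the cylinders in the decomposition of $U\cap\pi^{-1}(U')$. Since $x_0\in U_1$ lies strictly between the two graphs $\varphi_i,\varphi_{i+1}$ that form the lower and upper lids of $U_1$, the vertical distance from $x_0$ to these two graphs is an upper bound for the distance from $x_0$ to the nearest sheet of $X$ inside the cone. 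The step I expect to require the most care is comparing this vertical gap to $\dist(x_0,\partial U_1)$: the boundary $\partial U_1$ consists of the two graph lids together with the part lying over $\partial U'$, and I must argue that the point of $X\cap\Cn$ realizing (or nearly realizing) $\dist(x_0, X\cap\Cn)$ is dominated by the distance to the lids, hence by $\dist(x_0,\partial U_1)$ up to a constant controlled by $C,\varepsilon$. This is where the gradient bound (c), $\|\grad\lambda_i\|\le C|\lambda_i|$, enters: it guarantees that the lids have bounded slope relative to their height, so that vertical and Euclidean distances to the lids are comparable, and the nearest point of $X$ in the cone is attained by essentially moving vertically.

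Combining the two cases, $\dist(x_0, X)=\min\{\dist(x_0, X\cap\Cn),\dist(x_0, X\setminus\Cn)\}$ is bounded by the maximum of the two estimates, both of which are at most $(\tilde C)^2\,\dist(x_0,\partial U_1)$ after absorbing the constant from (c) into $\tilde C$ (which is legitimate since $\tilde C$ in \eqref{bound2} already depends only on $C,\varepsilon$). The main obstacle is thus the in-cone comparison in the previous paragraph: one must verify that $\partial U_1$ does not come \emph{closer} to $x_0$ than $X$ does along its lids in a way that would break the inequality, and that the lateral boundary over $\partial U'$ is already accounted for by the first case. I would handle this by noting that near $x_0$ the relevant walls of $U_1$ are precisely pieces of $X$ inside $\Cn$, so the distance to those walls and the distance to $X$ inside the cone coincide up to the slope constant from (c), yielding $\dist(x_0, X\cap\Cn)\le \tilde C\,\dist(x_0,\partial U_1)\le (\tilde C)^2\,\dist(x_0,\partial U_1)$ since $\tilde C\ge 1$.
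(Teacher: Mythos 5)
Your proposal has the key inequality backwards, and this breaks both halves of the plan. In your ``outside the cone'' step you assert $\dist(x_0',\partial U')\le \dist(x_0,\partial U_1)$; but the segment argument you sketch (move $x_0$ horizontally toward the nearest point of $\partial U'$) shows exactly the opposite: that segment must cross $\partial U_1$, so $\dist(x_0,\partial U_1)\le \dist(x_0',\partial U')$. The reversed inequality is genuinely false under the hypotheses of the lemma, because $\partial U_1$ also contains the two graph lids, which lie in $X$ and can be much closer to $x_0$ than any vertical wall: take for $U$ the region between the two sheets of a very flat ellipse, $U'$ the whole base (so $\partial U'=\Delta_\pi$ and \eqref{indassumption} holds), and $x_0$ just below the upper lid; then $\dist(x_0,\partial U_1)$ is tiny while $\dist(x_0',\partial U')$ is of order one, and $\pi$ is regular at $x_0$. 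Your ``inside the cone'' step has a complementary gap: you compare $\dist(x_0,X\cap\Cn)$ only with the distance to the lids and refer the lateral wall back to ``the first case'' --- which is circular, since the first case is the broken one. Indeed the unconditional bound $\dist(x_0,X\cap\Cn)\le \tilde C\,\dist(x_0,\partial U_1)$ does not follow from the slope condition (c) alone: for a tall thin cylinder with $x_0$ near the lateral wall and far from both lids, the right-hand side is the small wall distance while the left-hand side is comparable to the height; excluding this configuration requires \eqref{indassumption} together with the fact that regularity at $x_0$ forces $\Delta_\pi$ to stay at horizontal distance at least a fixed multiple of $\dist(x_0,X\cap\Cn)$ (Lemma 5.2 of \cite{parusinski1994}, which reappears in the paper as Lemma \ref{lem}) --- machinery you never invoke.

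The missing idea is that no unconditional bounds are needed; what is needed is a case analysis on which part of $\partial U_1$ realizes the distance. Splitting $\partial U_1$ into its vertical part, lying in $\pi^{-1}(\partial U')$, and the part lying in $X$, one gets the exact identity
\begin{align*}
\dist (x_0, \partial U_1) = \min \{ \dist (x_0, X),\  \dist (x'_0, \partial U')\} ,
\end{align*}
which is \eqref{distance2} in the paper. If the minimum equals $\dist(x_0,X)$, then \eqref{main} holds trivially with constant $1$. If the minimum equals $\dist(x_0',\partial U')$, then $\dist(x_0,\partial U_1)=\dist(x_0',\partial U')$ \emph{by definition of the case}, so the direction-of-inequality problem disappears, and since $X\setminus\Cn\subset X$ implies $\dist(x_0,X)\le \dist(x_0,X\setminus\Cn)$, the chain \eqref{bound2} and \eqref{indassumption} closes the proof:
\begin{align*}
\dist (x_0, X)\le \dist (x_0, X\setminus \Cn ) \le  \tilde C \dist (x'_0, \Delta_\pi)   \le (\tilde C)^2 \dist (x'_0, \partial U') = (\tilde C)^2   \dist (x_0, \partial U_1) .
\end{align*}
Note that in this argument the set $X\cap\Cn$ is never estimated at all: the step you expected to be the technical core of the proof (comparing vertical and Euclidean distances to the sheets via condition (c)) is unnecessary, since condition (c) enters only through \eqref{bound2}, which is given.
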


\begin{proof}
%\begin{align}\label{distance1}
% \dist (x_0, X) = \min \{ \dist (x_0, X\setminus \Cn ),  \dist (x_0, X\cap \Cn )\} ,
%\end{align}
We decompose  $\partial U_1$ into two parts.  The first one is vertical, i.e. contained in 
$\pi ^ {-1} (\partial U')$ and the second part is contained in $X$.  The distance to the first one from $x_0$ equals to  the horizontal distance, 
that is  $\dist (x'_0, \partial U')$.  Thus we have 
\begin{align}\label{distance2}
  \dist (x_0, \partial U_1) = \min \{ \dist (x_0, X),  \dist (x'_0, \partial U')\} .  
\end{align}
If $  \dist (x_0, \partial U_1)= \dist (x_0, X)$ then  \eqref{main} holds with $\tilde C=1$, otherwise 
   $ \dist (x_0, \partial U_1)  =  \dist (x'_0, \partial U') \le  \dist (x_0, X)$ and then by \eqref{indassumption} and \eqref{bound2}  
\begin{align}\label{bound4}
 \dist (x_0, X\setminus \Cn ) \le  \tilde C \dist (x'_0, \Delta_\pi)   \le (\tilde C)^2 \dist (x'_0, \partial U') \le 
   (\tilde C)^2   \dist (x_0, X)  .  
\end{align}
\end{proof}

Thus to complete the proof of Theorem  \ref{forschapira} it suffices to show that  the assumptions of 
Lemma \ref{lem_induction} are satisfied.  This follows from the inclusion $\partial \pi (U) \subset \Delta_\pi$ 
that gives $\dist (x'_0, \Delta_\pi)  \le \dist (x'_0, \partial  \pi (U))$, and from $\dist (x'_0, \partial \pi (U))\le \tilde C  \dist (x'_0, \partial U')$ that holds 
by the inductive assumption.  This ends the proof of Theorem \ref{forschapira}.

%%%%%%%%%%%%%%%%%%%%%%%%%%%%%%%%%%%%%%%%%%%%%%%%%
\subsection{L-regular sets}\label{Lregular}

%Recall after \cite{5}, \cite{16} the notion of an $L$-regular subanalytic set.  
Let $Y\subset \R^n$ be subanalytic, $\dim Y=n$. 
Then $Y$ is called \emph {L-regular (with respect to given system of coordinates)} if 
\begin{enumerate}
\item
if $n=1$ then $Y$ is a non-empty  closed bounded interval;
\item
if $n>1$ then $Y$ is of the form 
\begin{align}\label{L-regular}
Y = \{(x',x_n)\in \R^n;  f(x') \le x_n \le g(x'), x'\in Y' \},
\end{align}
where $Y'\subset \R^{n-1}$ is L-regular, $f$ and $g$ are continuous subanalytic functions defined in $Y'$.  It is also assumed that 
on the interior of $Y$, $f$ and $g$  are 
analytic, satisfy $f<g$, and have bounded first order 
partial derivatives.  
\end{enumerate}
If $\dim Y = k < n$ then we say that $Y$ is  \emph {L-regular (with respect to given system of coordinates)} if 
\begin{equation}\label{Lgraph}
Y=\{(y,z) \in \R^{k}\times \R^{n-k}; \, z = h(y), \, y \in Y' \} , 
\end{equation}
where $Y'\subset \R^{k}$ is L-regular, $\dim Y'=k$, $h$ is a continuous subanalytic map defined on $Y'$, 
such that $h$ is real analytic on the interior of $Y$, and has the  first order 
partial derivatives bounded.  

We say that $Y$ is \emph{L-regular} if it is L-regular with respect to a linear (or equivalently orthogonal) 
system of coordinates on $\R^n$.

We say that $A\subset \R^n$ is an  \emph{L-regular cell} if  $A$ is the relative interior of an L-regular set.  
That is, it is the interior of an L-regular set if $\dim A =n$,  and it is the graph of $h$ restricted to $Int(Y')$ 
for an  L-regular set of the form \eqref{Lgraph}.  By convention, every point is a zero-dimensional L-regular cell.   

By \cite{kurdyka}, see also Lemma 2.2 of \cite{parusinski1994} and Lemma 1.1 of \cite{kurdykaparusinski}, 
 L-regular sets and L-regular cells satisfy the following property, 
called in \cite{kurdyka} quasi-convexity.  We say that $Z\subset \R^n$ is \emph{quasi-convex} if  there is a 
constant $C>0$ such that every two points $x,y$ of $Z$
 can be connected in $Z$ by a continuous subanalytic 
arc of length bounded by $C\|x-y\|$.   It can be shown that for an L-regular set or cell $Y$ the constant 
$C$ depends only on $n$ and the bounds on first order partial derivatives  of functions describing $Y$ 
in the above definition.   
By  Lemma 2.2 of \cite{parusinski1994}, an L-regular cell is homeomorphic to the (open) unit ball.

Let $Y$ be a subanalytic subcet of a real analytic manifold $M$.  We say that $Y$ is \emph{$L$-regular} if there exists its neighborhood  $V$ in $M$ and an analytic diffeomorphism $\varphi :V \to \R^n$ such that 
$\varphi (Y)$ is $L$-regular.    Similarly we define an $L$-regular cell in $M$.  

\subsection{Proof of Theorem \ref{extra}}

Fix a constant $C_1$ sufficiently large and a  projection $\pi : \R^n \to \R^{n-1}$ that is assumed, for simplicity,
 to be the standard one.  
We suppose that $\pi$ restricted to $X=\partial U$ is finite.   We say that $x' \in \pi (U) \setminus \Delta_\pi$ is
 \emph{$C_1$-regularly covered} if there is a neighborhood $\tilde U'$ of $x'$ in  
$\pi (U) \setminus \Delta_\pi$ such that $X\cap \pi^{-1} (\tilde U')$ is the union of graphs of analytic functions 
with all first order partial 
derivatives bounded (in the absolute value) by $C_1$.  Denote by $U'(C_1)$ the set of all 
$x' \in \pi (U) \setminus \Delta_\pi$ that 
are $C_1$ regularly covered.   Then $U'(C_1)$ is open (if we use strict inequalities while defining it) 
and subanalytic.  
By Lemma 5.2 of  \cite{parusinski1994}, if $\pi$ is a $(C,\varepsilon)$-regular projection at $x_0$  then $x'_0$ is $C_1$-regularly covered, for $C_1$ sufficiently big $C_1\ge 
C_1(C,\varepsilon)$.  Moreover we have the following result.  

\begin{lem}\label{lem} 
Given positive constants $C,\varepsilon$.  Suppose that the constants $\tilde C$ and $C_1$  are chosen sufficiently big, 
$C_1\ge C_1(C,\varepsilon )$, $\tilde C\ge \tilde C(C,\varepsilon )$ .   
Let  $\pi$ be $(C,\varepsilon)$-regular  at $x_0\notin X$ and let 
\begin{align*}
V' = \{x'\in \R^{n-1}; \dist (x',x'_0) < (\tilde C)^{-1} dist (x_0, X\cap \Cn )\} .  % < (\tilde C)^{-2} \min_i |\varphi_i(0)| .
\end{align*}
Then  $\pi ^{-1} (V')\cap X\cap \Cn $ 
is the union of graphs of $\varphi_i$ with all first order partial derivatives bounded (in the absolute value) by $C_1$.  
Moreover, then either  $\pi ^{-1} (V')\cap (X\setminus \Cn) =  \emptyset $  or  
$$\dist (x'_0, \Delta_\pi) =  \dist (x'_0, \pi (X\setminus \Cn )) \le \dist (x'_0, \partial U'(C_1)) . $$ 
\end{lem}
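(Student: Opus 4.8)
The plan is to prove the two assertions in turn: first the gradient bound on the part of $X$ over $V'$ lying inside the cone, and then the distance identity together with the bound by $\dist(x'_0,\partial U'(C_1))$, splitting along the dichotomy in the statement.

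For the first assertion I would start from the description of $X\cap\Cn$ furnished by condition (b) of $(C,\varepsilon)$-regularity. Writing $x_0=(x'_0,a_n)$ and taking $\pi$ standard, each component of $X\cap\Cn$ is the image of $\eta\mapsto(x'_0+\lambda_i(\eta)\eta,\,a_n+\lambda_i(\eta))$ on $|\eta|<\varepsilon$, which I would turn into a graph $x_n=\varphi_i(x')$ by inverting $\Phi_i(\eta)=x'_0+\lambda_i(\eta)\eta$. Here $D\Phi_i=\lambda_i I+\eta\,(\grad\lambda_i)^{T}$, and condition (c) with $|\eta|<\varepsilon$ shows this is $\lambda_i$ times a perturbation of the identity of operator norm $\le C\varepsilon$; so for $\varepsilon$ small $\Phi_i$ restricts to a diffeomorphism of a neighborhood of $0$ onto a ball about $x'_0$, with $\|(D\Phi_i)^{-1}\|\le(|\lambda_i|(1-C\varepsilon))^{-1}$. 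The chain rule then gives $\|\grad\varphi_i\|=\|\grad\lambda_i\,(D\Phi_i)^{-1}\|\le C/(1-C\varepsilon)$, so any $C_1\ge C/(1-C\varepsilon)$ works. It remains to see that each graph covers $V'$: the image of $\Phi_i$ contains a ball about $x'_0$ of radius comparable to $|\lambda_i(0)|\varepsilon$, while $\dist(x_0,X\cap\Cn)\le|\lambda_i(0)|$ for every $i$ (the point with $\eta=0$ lies in $X\cap\Cn$), so the radius $(\tilde C)^{-1}\dist(x_0,X\cap\Cn)$ of $V'$ is smaller than that once $\tilde C\ge\tilde C(C,\varepsilon)$. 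Hence $\pi^{-1}(V')\cap X\cap\Cn$ is exactly the union of the graphs $\varphi_i$ over $V'$, with gradients $\le C_1$.

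For the second assertion, if $\pi^{-1}(V')\cap(X\setminus\Cn)=\emptyset$ we are in the first case; otherwise pick $y\in X\setminus\Cn$ with $y':=\pi(y)\in V'$, set $d:=\dist(x_0,X\cap\Cn)$ and $r:=\dist(x'_0,\pi(X\setminus\Cn))\le|y'-x'_0|<d/\tilde C$. The inclusion $\Delta_\pi\subseteq\pi(X\setminus\Cn)$ (the singular locus and the critical points lie outside $\Cn$) gives $r\le\dist(x'_0,\Delta_\pi)$ for free, so the content is the reverse inequality. Choose $z=(z',z_n)\in X\setminus\Cn$ with $|z'-x'_0|=r$; note $r>0$ since every point over $x'_0$ other than $x_0$ lies in $\Cn$. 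I would first exclude $z\in\partial\Cn$: there $|z_n-a_n|=r/\varepsilon$, so $\dist(x_0,z)=r\sqrt{1+\varepsilon^{-2}}<d$ once $\tilde C>\sqrt{1+\varepsilon^{-2}}$, whereas moving $x'$ from $z'$ slightly towards $x'_0$ along the local graph through $z$ enters $\Cn$, forcing $z\in\overline{X\cap\Cn}$ and hence $\dist(x_0,z)\ge d$ — a contradiction. Thus $z\notin\overline{\Cn}$. If in addition $z'\notin\Delta_\pi$, then $z$ is a noncritical regular point, so near $z$ the set $X$ is a single graph lying outside $\overline{\Cn}$, and pushing $x'$ towards $x'_0$ yields points of $X\setminus\Cn$ with projection distance $<r$, contradicting minimality. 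Hence $z'\in\Delta_\pi$, giving $\dist(x'_0,\Delta_\pi)\le r$ and the asserted equality.

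Finally, to get $r\le\dist(x'_0,\partial U'(C_1))$ I would show $B(x'_0,r)\subseteq U'(C_1)$. For $w'\in B(x'_0,r)$ we have $|w'-x'_0|<r\le\dist(x'_0,\pi(X\setminus\Cn))$, so no point of $X\setminus\Cn$ lies over $w'$; since $B(x'_0,r)\subseteq V'$, the first assertion shows $X$ over a neighborhood of $w'$ is exactly the union of the $\varphi_i$ with gradients $\le C_1$, so $w'$ is $C_1$-regularly covered. Moreover $B(x'_0,r)$ avoids $\Delta_\pi$, hence avoids $\partial\pi(U)\subseteq\Delta_\pi$; as $\pi(U)$ is open, $B(x'_0,r)$ is connected, meets $\pi(U)$ at $x'_0$ (in the application $x_0\in U$, so $x'_0\in\pi(U)$), and avoids $\partial\pi(U)$, it lies in $\pi(U)\setminus\Delta_\pi$, whence $B(x'_0,r)\subseteq U'(C_1)$. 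The main obstacle is the middle step — showing the nearest projection of $X\setminus\Cn$ must land in $\Delta_\pi$ — since it is precisely there that the largeness of $\tilde C$ (to exclude the cone-boundary case) and the regular-projection structure (to exclude nearby regular points) are both essential.
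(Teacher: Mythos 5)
Your proof is correct and follows essentially the same route as the paper's (much terser) argument: the graph-with-bounded-gradient description of $X\cap\Cn$ over $V'$ is exactly what the paper imports from Lemma 5.2 of \cite{parusinski1994}, and your two-case analysis (realizing point on $\partial\Cn$, excluded by the $\tilde C>\sqrt{1+\varepsilon^{-2}}$ distance computation, versus realizing point off $\overline{\Cn}$, excluded by sliding along the local graph) is precisely a detailed justification of the paper's one-line assertion that any point realizing $\dist(x'_0,\pi(X\setminus\Cn))$ must lie in $\Delta_\pi$. The only caveat is that your final step, proving $\dist(x'_0,\pi(X\setminus\Cn))\le\dist(x'_0,\partial U'(C_1))$ via the inclusion $B(x'_0,r)\subseteq U'(C_1)$, invokes $x'_0\in\pi(U)$ (i.e.\ $x_0\in U$), which is not among the hypotheses of the lemma ($x_0\notin X$ only) but, as you note yourself, does hold in the paper's application; alternatively one can avoid it by arguing directly that any boundary point of $U'(C_1)$ at distance less than $r$ from $x'_0$ would itself be $C_1$-regularly covered, a contradiction.
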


\begin{proof} 
We only prove the second part of the statement since the first part follows from Lemma 5.2 of \cite{parusinski1994}.  
If $\pi ^{-1} (V')\cap X\setminus \Cn \ne   \emptyset $ then any point of $\pi (X\setminus \Cn )$ realizing 
$\dist (x'_0, \pi (X\setminus \Cn ))$ must be in the discriminant set $ \Delta_\pi$.  
\end{proof}

We now apply to $U'(C_1)$ the inductive hypothesis and thus assume that  $U'(C_1)= \bigcup U'_i$ is a 
finite regular cover by open $L$-regular cells.  Fix one of them $U'$ and let $U_1$ be a member of the cylindrical decomposition  of $U\cap \pi^{-1} (U')$.  Then $U_1$ is an L-regular cell.   Let $x_0\in U_1$.  We apply to $x_0$ Lemma \ref{lem}.  

If  $\pi ^{-1} (V')\cap (X\setminus \Cn) =  \emptyset $  then 
\begin{align*}
\dist (x_0, X)\le dist (x_0, X\cap \Cn )\le \tilde C  \dist (x'_0, \partial U'(C_1)) \le \tilde C^2  \dist (x'_0, \partial U'), 
\end{align*}
where the second inequality follows from the first part of Lemma \ref{lem} and the last inequality by the induction hypothesis.  
Then $\dist (x_0, X)\le  \tilde C^2  \dist (x_0, \partial U_1)$ follows from \eqref{distance2}.  

Otherwise, $\dist (x'_0, \Delta_\pi)  \le \dist (x'_0, \partial U'(C_1)) \le \tilde C  \dist (x'_0, \partial U')$ and the claim 
follows from Lemma \ref{lem_induction}.  This ends the proof.

%%%%%%%%%%%%%%%%%%%%%%%%%%%%%%%%%%%%%%%%%%%%%%%%%
\subsection{Proof of Theorem \ref{algebra}}
The proof is based on the following result.

\begin{thm}\label{dokladny} {\rm [Theorem A of \cite{kurdyka}]}
Let $Z_i \subset \R^n$ be a finite family of subanalytic sets.  Then there is   be a finite disjoint collection 
$\{A_j\}$ of L-regular cells such that each $Z_i$ is the disjoint union of some of $A_j$.  
\end{thm}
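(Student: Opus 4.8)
The plan is to prove the statement by induction on the dimension $n$ of the ambient space, using the regular projection theorem together with the cylindrical decomposition of Proposition \ref{cylinders}; the case $n=0$ (points) and $n=1$ (points and open bounded intervals) being immediate. First I would reduce to a partition problem. Replacing $\{Z_i\}$ by the finite family of atoms of the Boolean algebra it generates, it suffices to produce a single finite \emph{disjoint} collection $\{A_j\}$ of L-regular cells such that each $Z_i$ is a union of some of the $A_j$; since L-regular cells are bounded it is enough to work inside a large closed ball containing all the $\overline{Z_i}$. The singular data of the family is then concentrated in the closed subanalytic set $X = \bigcup_i \partial Z_i$ together with the $Z_i$ of dimension $<n$; one has $\dim X \le n-1$, and off $X$ each $Z_i$ is locally trivial, i.e.\ locally equal to the ambient set or empty.

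For the inductive step I would apply the regular projection theorem to the compact set $\overline X$ (here $\dim X<n$): this yields directions $\xi_1, \ldots, \xi_{n+1}\in\R^{n-1}$ and constants $C,\varepsilon>0$ such that every $x_0\in\R^n$ admits a $(C,\varepsilon)$-regular projection $\pi_{\xi_k}$ with respect to $X$. Fix one projection $\pi=\pi_{\xi_k}$, form its discriminant $\Delta_\pi$, and apply the inductive hypothesis in $\R^{n-1}$ to the family consisting of the images $\pi(Z_i)$, the set $\Delta_\pi$, and the projected frontier of the region where $\pi$ is regular, obtaining a disjoint L-regular decomposition $\{A'\}$ of the base. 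Over each base cell $A'$ lying in the regular region, Proposition \ref{cylinders} presents $X\cap\pi^{-1}(A')$ as a union of graphs $\varphi_1<\cdots<\varphi_m$ of analytic functions; crucially, regularity forces these functions to be $C_1$-regularly covered, i.e.\ to have first order partial derivatives bounded by a constant depending only on $C,\varepsilon$ (Lemma 5.2 of \cite{parusinski1994}). Hence the graphs and the open bands $\{\varphi_i<x_n<\varphi_{i+1}\}$ over $A'$ are L-regular cells in the coordinate system adapted to $\xi_k$, and each atom, restricted to $\pi^{-1}(A')$, is a union of these.

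The hard part will be the globalization across the $n+1$ projections. A single direction $\xi_k$ is regular only on its subanalytic region $R_k\subset\R^n$, and the families produced for different $k$ overlap and are L-regular with respect to \emph{different} coordinate systems, so their naive union is neither disjoint nor refining. I would resolve this by processing the projections successively: after decomposing the part of space lying over base cells in the regular region of $\pi_{\xi_1}$, the as yet uncovered part of $X$ and of the sets lies in $\R^n\setminus R_1$, which is covered by $R_2,\ldots,R_{n+1}$. I would then enlarge the data $X$ by the (at most $(n-1)$-dimensional) frontiers of the already constructed cells and of $R_1$, re-run the construction with $\pi_{\xi_2}$, and iterate, each stage decomposing only the previously undecomposed region so that disjointness is preserved and after $n+1$ steps every point is assigned to a cell.

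Verifying that this successive refinement terminates with genuine L-regular cells is the crux of the argument. Two points require the full strength of Kurdyka's construction: that enlarging $X$ by the newly produced frontiers does not destroy the uniform $(C,\varepsilon)$-regularity estimates governing the bounds on the $\varphi_i$ (so that the derivative bounds underlying L-regularity persist through the iteration), and that the finitely many families, each adapted to its own linear coordinate system, assemble into one disjoint collection simultaneously compatible with all the $Z_i$. Once these are secured, the resulting $\{A_j\}$ is the desired disjoint family of L-regular cells refining the given family, completing the induction.
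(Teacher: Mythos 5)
You are attempting to prove a statement that the paper itself does not prove: Theorem \ref{dokladny} is quoted verbatim as Theorem A of \cite{kurdyka} and is used as an external input to the proof of Theorem \ref{algebra}, so there is no internal proof to compare against; what you have written is an attempted reconstruction of Kurdyka's theorem. As a proof it has a genuine gap, and you flag it yourself: the two points you say ``require the full strength of Kurdyka's construction'' --- persistence of the derivative bounds when $X$ is enlarged, and assembling the families adapted to different coordinate systems into one disjoint refining collection --- are not technical side conditions to be secured afterwards; they are the entire content of the theorem. Deferring them to Kurdyka's construction makes the argument circular rather than a proof.

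Concretely, the iteration you propose breaks down at the second stage. The directions $\xi_1,\ldots,\xi_{n+1}$ and the constants $(C,\varepsilon)$ are furnished by the regular projection theorem for the \emph{fixed} compact set $X$; once you enlarge $X$ by the frontiers of the cells built in the first stage (and of the region $R_1$), nothing guarantees that $\pi_{\xi_2}$ is $(C,\varepsilon)$-regular with respect to the enlarged set, since regularity at $x_0$ is destroyed by new sheets entering the cone $\Cn_\varepsilon(x_0,\xi_2)$. Re-invoking the regular projection theorem produces fresh directions, so the process does not visibly terminate after $n+1$ steps, and the undecomposed region need not drop in dimension from one stage to the next. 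Moreover, cells that are L-regular with respect to different linear coordinate systems cannot simply be intersected or juxtaposed: a common refinement of two such families need not consist of L-regular cells in any single system, and the disjointness of your final union is asserted rather than arranged. Two secondary gaps: Lemma 5.2 of \cite{parusinski1994} bounds the derivatives of the $\varphi_i$ only over base points that are $C_1$-regularly covered, so your base decomposition must refine the set $U'(C_1)$ (exactly as in the paper's proof of Theorem \ref{extra}), not merely $\pi(Z_i)$ and $\Delta_\pi$; and sets $Z_i$ of codimension at least $2$ require iterated graph decompositions via projections onto $\R^k$ with $k<n-1$, which a single codimension-one cylindrical step as in Proposition \ref{cylinders} does not provide. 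The actual proofs (\cite{kurdyka}, and the o-minimal versions in \cite{kurdykaparusinski}, \cite{pawlucki}) avoid your iteration altogether: they partition the set in advance according to which of the finitely many generic directions is good at each point --- a tangent-space condition that does not change as the construction proceeds --- and run a simultaneous induction, which is precisely the machinery your sketch leaves as a black box.
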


Similar results in  the (more general) o-minimal set-up are proven in \cite{kurdykaparusinski} and 
\cite{pawlucki}.

Let $U$ be a relatively compact open subanalytic subset of $\R^n$.  By Theorem \ref{dokladny}, $U$ 
is a disjoint union of  L-regular cells and hence it suffices to show the statement of Theorem \ref{algebra} for 
a relatively compact, not necessarily open,  L-regular cell.   We consider first the case of an open L-regular cell.  Thus suppose that 
\begin{align}\label{L-regular2}
U = \{(x',x_n)\in \R^n;  f(x') < x_n < g(x'), x'\in U' \},
\end{align}
where $U'$ is a relatively compact L-regular cell, $f$ and $g$ are subanalytic and 
analytic functions on $U'$ with the first 
order partial derivatives bounded.  Then, by the quasi-convexity of $U'$,  $f$ and $g$ are Lipschitz.   
%Denote $Y'= \overline{U'}$.  
 Thanks to the classical result of Banach, cf. \cite{banach} (7.5) p. 122, we may suppose  
that $f$ and $g$ are restrictions to $U'$ of Lipschitz subanalytic functions, denoted also by $f$ and $g$,  defined everywhere on $\R^{n-1}$ and satisfying $f\le g$.  
Indeed, Banach gives the following formula for such an extension  of a Lipschitz function $f$ defined on a subset 
of a metric space 
$$
\tilde f (p) = \sup_{q\in B} f(q) - L \|p-q\|,
$$
where $L$ is the Lipschitz constant of $f$.  Then $\tilde f$ is Lipschitz with the same constant as $f$ 
and subanalytic if so 
was $f$.  
By  the inductive assumption on dimension we may assume that $U$ is given by \eqref{L-regular2} with $U'$ an 
L-regular cell.   Denote 
$U$ by $U_{f,g}$ to stress its dependence on $f$ and $g$ (with $U'$ fixed).  
Then 
$$
1_{U_{f,g}} = 1_{U_{f-1,g}}+ 1_{U_{f,g+1}}- 1_{U_{f-1,g+1}} 
$$
and $U_{f-1,g}$,  $U_{f,g+1}$. and $U_{f-1,g+1}$ are open  subanalytic Lipschitz balls.  

Suppose now that 
\begin{equation}\label{Lgraph2}
U=\{(y,z) \in \R^{k}\times \R^{n-k}; \, z = h(y), \, y \in U' \} , 
\end{equation}
where $U'$ is a relatively compact open L-regular cell of $\R^k$, $h$ is a subanalytic and  analytic map defined  on $U'$ 
with the first 
order partial derivatives bounded.  Hence $h$ is Lipschitz.   We may again assume that $h$ is the restriction of a 
Lipschitz subanalytic map $h:\R^k \to \R^{n-k}$ and then, by the inductive hypothesis, that $U'$ is 
a subanalytic Lipschitz ball. 
Let 
$$
U_\emptyset =  \{ (y,z) \in U'\times \R^{n-k}; \,   h_i (y)-1 <z_i< h_i (y) +1\,  , i=1, ...,n-k\} 
$$
For $I\subset \{1, ..., n-k\}$ we denote 
$$
U_I =  \{ (y,z) \in U_\emptyset ;  \,   z_i\ne  h_i (y) \text { for }  i\in I\} .  
$$
Note that each $U_I$ is the disjoint union of $2^{|I|}$ of open subanalytic Lipschitz balls and that
$$
1_{U} =   
\sum_{I\subset \{1, ..., n-k\}} (-1) ^{|I|} 1_{U_I}.  
$$

This ends the proof.

%%%%%%%%%%%%%%%%%%%%%%%%%%%%%%%%%%%%%%%%%%%%%%%%%%%%%%%

\end{document}